\DeclareMathSymbol{\rightrightarrows}  {\mathrel}{AMSa}{"13}
\def\sk{\operatorname{sk}}
\def\varholim@#1#2{\mathop{\vtop{\ialign{##\crcr
 \hfil$#1\m@th\operator@font holim$\hfil\crcr
 \noalign{\nointerlineskip\kern\ex@}#2#1\crcr
 \noalign{\nointerlineskip\kern-\ex@}\crcr}}}}
\def\hocolim{\mathpalette\varholim@\rightarrowfill@} 
\def\hoinvlim{\mathpalette\varholim@\leftarrowfill@}
\newtheorem{theorem}{Theorem}
\newtheorem{lemma}[theorem]{Lemma}
\newtheorem{corollary}[theorem]{Corollary}
\theoremstyle{definition}
\newtheorem{example}[theorem]{Example}
\begin{document}


\title{Complexity reduction for path categories}

\author{J.F. Jardine}

\date{July 19, 2016}

\maketitle

\section*{Introduction}

A finite cubical complex $K$ is a subobject $K \subseteq \square^{n}$ of
a standard $n$-cell in the category of cubical sets.

The object $\square^{n}$ is represented by the poset
$\mathcal{P}(\underline{n})$ of subsets of the set $\underline{n} = \{
1,2, \dots ,n \}$. This poset is an object of the box category
$\square$ that defines cubical sets (see, for example, \cite{J40}). The complex
$K$ is defined by a list of non-degenerate cells $\sigma:
\square^{k} \subseteq \square^{n}$. These cells can be identified with
poset inclusions $[A,B] \subseteq \mathcal{P}(\underline{n})$ of
intervals, where
\begin{equation*}
  [A,B] = \{ F\ \vert\ A \subseteq F \subseteq B \}
\end{equation*}
where $A \subseteq B$ are subsets of $\underline{n}$.

As such, $K$ is a list of intervals $[A,B] \subseteq
\mathcal{P}(\underline{n})$ which is closed under taking
subintervals.
\medskip

Finite cubical complexes are the higher dimensional automata of
geometric concurrency theory. In that setting, the vertices of a
cubical complex $K$ model the states of a concurrent system, and its
$k$-cells represent (where possible) the simultaneous action of $k$
processors. The cells of the ambient $n$-cell which are not in $K$
represent constraints on the system.

The main object of study associated to $K$ in this form of concurrency
theory is its collections of execution paths. These paths are the
morphisms of the {\it path category} $P(K)$.

The path category functor is now well
known --- it is also called the fundamental category and denoted by
$\tau_{1}(K)$ in the higher categories literature
\cite{Joyal-quasi-cat}.

The emphasis in concurrency theory is different, and is completely
concerned with giving exact specifications of path categories $P(K)$
in the geometric setting described above. Techniques leading to
explicit, algorithmic calculations of path categories form the subject of this
paper.
\medskip

The {\it triangulation} $\vert K \vert$ of the finite cubical complex $K$ is
a finite simplicial complex that is defined by ``putting in the
missing edges''. More explicitly,
\begin{equation*}
  \vert \square^{n} \vert = (\Delta^{1})^{\times n} = B\mathcal{P}(\underline{n}),
\end{equation*}
is the nerve of the poset $\mathcal{P}(\underline{n})$, and $\vert K
\vert$ is constructed by gluing together such objects along the
incidence relations for the cells of $K$.

The path category functor $X \mapsto P(X)$ for simplicial sets $X$ is
most succinctly defined to be the left adjoint of the nerve
functor. The path category construction for cubical sets is a
specialization of this functor, and we can write
\begin{equation*}
  P(K) := P(\vert K \vert)
\end{equation*}
for cubical complexes $K$.

In practice, the objects of $P(K)$ are the vertices of $K$, and the
morphisms are equivalence classes of paths in $1$-cells, modulo
commutativity conditions that are defined by $2$-cells.

Similarly, the
path category $P(L)$ of a finite simplicial complex $L$ has the
vertices of $L$ as objects, and has morphisms given by equivalence
classes of paths in $1$-simplices, modulo commutativity conditions
that are defined by $2$-simplices.
\medskip

There is an algorithm for computing $P(L)$ for finite simplicial
complexes $L \subseteq \Delta^{n}$ that arises from a $2$-category
$P_{2}(L)$ that is defined by the simplices of $L$, and for which
$P(L)$ is the path component category of $P_{2}(L)$ in the sense that
there is a bijection
\begin{equation*}
P(L)(x,y) \cong
\pi_{0}(P_{2}(L)(x,y))
\end{equation*}
for all vertices $x,y$. The $2$-category $P_{2}(L)$ is
defined in \cite{pathcat}.

The algorithm can be
summarized as follows:
\begin{itemize}
\item[1)] Restrict to the $2$-skeleton $\sk_{2}(L)$ of $L$. 
\item[2)] Find all paths (strings of non-degenerate $1$-simplices)
\begin{equation*}
\omega: v_{0} \xrightarrow{\sigma_{1}} v_{1} \xrightarrow{\sigma_{2}} \dots \xrightarrow{\sigma_{k}} v_{k}
\end{equation*}
in $L$.
\item[3)] Find all morphisms in the category $P_{2}(L)(v,w)$ for all vertices $v < w$ in $L$ (ordering in $\Delta^{n}$).
\item[4)] Find the sets of path components for all categories $P_{2}(L)(v,w)$.
\end{itemize}

This algorithm is the {\it path category algorithm}. It has been coded
in C and Haskell by M. Misamore --- Misamore's code is published on
github.com and hackage.haskell.org. The original test of concept was
written by G. Denham in Macaulay 2.

Except for the first step, which is due to a basic result for path
categories \cite{pathcat} that also appears in Lemma \ref{lem 1}
below, the algorithm is brute force. It works well for toy examples,
but it is easy to generate simple examples which output very large
lists of morphism sets.

\begin{example}
The ``necklace'' $L \subseteq \Delta^{40}$ be the subcomplex
\begin{equation}\label{eq 1}
\xymatrix{
& 1 \ar[dr] && 3 \ar[dr] &&&& 39 \ar[dr] \\
0 \ar[ur] \ar[rr] && 2 \ar[ur] \ar[rr] && 4 & \dots 
& 38 \ar[ur] \ar[rr] && 40
}
\end{equation}
This is 20 copies of the complex $\partial\Delta^{2}$ glued together.
It is visually obvious that there are $2^{20}$ morphisms in
$P(L)(0,40)$, and the text file list of morphisms of $P(L)$ consumes 2 GB of
disk space.

In general, the size of the path category $P(L)$ can grow
exponentially with $L$.
\end{example}

Extreme examples aside, various complexity reduction methods have
been developed for the path category algorithm, and
the purpose of this note is to give an account of these
techniques.

The mathematical results of this paper are quite simple. Most of
the statements amount to constructions of subcomplexes $K \subseteq L$
such that the induced functor $P(K) \to P(L)$ between path categories
is fully faithful.

Explicitly,
this means that if $v,w$ are vertices of $K$, then
the induced function $P(K)(v,w) \to P(L)(v,w)$ of morphism sets is a
bijection. In this case, the morphism set $P(L)(v,w)$ can be computed
in the smaller context given by $K$, which can be much simpler
computationally.

Most of the time, $K$ is a ``full'' subcomplex of $L$. Fullness is a
general criterion for the induced functor $P(K) \to P(L)$ to be fully
faithful. The concept (appearing in Section 1 of this paper) is used
repeatedly, for the method of deletions of sources and sinks from a
simplicial complex in Section 2, and for deriving Mismore's method of
removing corners from a cubical complex in Section 3.

Section 4, on refinement of cubical complexes, is the opposite in some
sense. The idea is that one can use the data that constructs a finite
cubical complex $K$ to construct a more complicated object
$K_{\alpha}$ in a way that produces a fully faithful functor $P(K) \to
P(K_{\alpha})$. One expects that this idea will be useful for studies
of successive approximations of cubical structures.

The last section, Section 5, gives a first, coarse method for
parallelizing the path category algorithm for calculating $P(K)$ for a
cubical complex $K$. All vertices of $K$ have a size, or cardinality,
that they inherit from the ambient cell $\square^{n}$. The
resulting size functor can be used to isolate disjoint full
subcomplexes, say $A$ and $B$, for which $P(A)$ and $P(B)$ can be
computed independently. All paths $u \to v$ of $K$ which start in $A$
and end in $B$ cross a ``frontier subcomplex'' whose cells define a
coequalizer picture (see (\ref{eq 4})) that allows one to compute
$P(K)(u,v)$ from the path categories $P(A)$ and $P(B)$.

The size functor is also used in Section 4, and it is very likely to
have continuing utility. One can think of this functor as a ticking clock,
but the relationship between that ``clock'' and the higher
dimensional automaton concept can be a bit fraught.

\section{Basic results}

The first step of the path category algorithm involves a direct appeal
to the following result:

\begin{lemma}\label{lem 1}
  The inclusion $\sk_{2}(X) \subseteq X$ of the $2$-skeleton of a simplicial set $X$ induces an isomorphism of categories
  \begin{equation*}
    P(\sk_{2}(X)) \xrightarrow{\cong} P(X).
  \end{equation*}
    \end{lemma}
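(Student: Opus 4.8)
The plan is to show that the path category functor $P$, being a left adjoint to the nerve, commutes with colimits and is determined by low-dimensional data. The key observation is that $P(X)$ depends only on the generators (morphisms coming from $1$-simplices) and the relations (commutativity conditions coming from $2$-simplices), so adding simplices of dimension $3$ and higher cannot change the category.

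First I would recall the explicit presentation of $P(X)$. Since the nerve functor $\operatorname{cat} \to s\mathbf{Set}$ has $P$ as its left adjoint, and every simplicial set is a colimit of its simplices, $P(X)$ can be computed as a coequalizer involving the free categories on the $1$-simplices, with relations imposed by the $2$-simplices. Concretely, objects are the $0$-simplices, morphisms are generated by the non-degenerate $1$-simplices $\sigma: v_0 \to v_1$, and one imposes the relations $d_1(\tau) = d_0(\tau)\circ d_2(\tau)$ for every $2$-simplex $\tau$, together with the degeneracy relation $s_0(v) = \mathrm{id}_v$. This presentation visibly involves only $\sk_2(X)$.

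The main step is then to verify that the inclusion $\sk_2(X) \subseteq X$ induces an isomorphism by comparing presentations. Both $P(\sk_2(X))$ and $P(X)$ have the same generators (the $1$-simplices, since $\sk_2$ contains all of these) and the same relations (from the $2$-simplices, again all present in $\sk_2$). The functor $P(\sk_2(X)) \to P(X)$ sends generators to generators and relations to relations, and since no $3$- or higher-dimensional simplex contributes any new generator or relation, the induced functor is a bijection on objects and on morphism sets. I would make this rigorous either by exhibiting an explicit inverse or by appealing to the fact that $P$ preserves the relevant colimits and that the $2$-coskeleton data suffices to reconstruct $P$.

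The hard part will be pinning down the precise form of the presentation of $P(X)$ and justifying that relations coming from higher simplices are consequences of the $2$-simplex relations rather than genuinely new constraints. The cleanest route is probably categorical: since $P$ is a left adjoint, $P(X) = \colim_{\Delta^n \to X} P(\Delta^n)$, and one checks that $P(\Delta^n)$ is already the poset category on $\{0,1,\dots,n\}$ for all $n$, so that the canonical map $P(\sk_2(\Delta^n)) \to P(\Delta^n)$ is an isomorphism for each $n$. Passing to the colimit over the simplex category of $X$ then yields the result. The delicate point is that colimits of categories are not computed na\"ively, so I would need to ensure the relevant diagrams of categories have their colimits preserved, which is where the adjunction and the skeletal filtration do the real work.
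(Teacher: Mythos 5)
Your strategy---identify $P(X)$ with the category presented by generators the $1$-simplices and relations the $2$-simplices---is viable and genuinely different from the paper's argument, but both of your routes bottom out in the same deferred claim, and that claim is essentially the whole lemma. In the presentation route, asserting that the presentation ``visibly involves only $\sk_{2}(X)$'' presupposes that simplices of dimension $\geq 3$ impose no new relations, which is exactly what you set out to prove. In the colimit route, the step ``$P(\Delta^{n})$ is already the poset category on $\{0,1,\dots,n\}$, \emph{so that} the canonical map $P(\sk_{2}(\Delta^{n})) \to P(\Delta^{n})$ is an isomorphism'' is a non sequitur: computing $P(\Delta^{n})$ tells you nothing about $P(\sk_{2}(\Delta^{n}))$ when $n \geq 3$. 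To compute the latter you need precisely the generators-and-relations description of $P$ on the $2$-dimensional simplicial set $\sk_{2}(\Delta^{n})$---the deferred fact again. That step is fixable: for a $2$-dimensional $Y$ a simplicial map $Y \to BC$ is determined by where vertices and $1$-simplices go, subject to the relation $f(d_{1}\tau) = f(d_{0}\tau) \circ f(d_{2}\tau)$ for each $2$-simplex $\tau$, since $Y$ has no nondegenerate simplices above dimension $2$; one then checks that the free category on the edges $i \to j$, $i<j$, modulo the triangle relations is $[n]$. With that in hand your colimit argument does close up, since $P$ and $\sk_{2}$ preserve colimits as left adjoints and a natural isomorphism of diagrams induces an isomorphism of colimits; your stated worry that ``colimits of categories are not computed na\"ively'' is, by contrast, a non-issue.

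The paper's proof sidesteps all of this with a single citation: the nerve $BC$ of a small category is $2$-coskeletal \cite[Lem.\ 3.5]{GJ}, i.e.\ $\hom(X,BC) \cong \hom(\sk_{2}(X),BC)$. Combined with the adjunction $\hom(P(Y),C) \cong \hom(Y,BC)$, this gives bijections $\hom(P(X),C) \cong \hom(P(\sk_{2}(X)),C)$ natural in $C$, and the Yoneda lemma then identifies $P(\sk_{2}(X)) \xrightarrow{\cong} P(X)$. Coskeletality of nerves is exactly the packaged form of ``higher simplices contribute no new generators or relations,'' so your proposal becomes a complete proof once you either cite that fact (whereupon your presentation machinery is unnecessary) or prove the presentation fact from scratch---but one of the two has to actually be done.
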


This result follows from the fact that the nerve $BC$ of a small
category $C$ is a $2$-coskeleton \cite[Lem. 3.5]{GJ}, which means that
there is a bijection
\begin{equation*}
  \hom(X,BC) \cong \hom(\sk_{2}(X),BC).
\end{equation*}

Lemma \ref{lem 1} is a substantial complexity reduction
step, in that it means that one can ignore much of the data for a
finite simplicial complex $L$ before computing $P(L)$.
\medskip

We now discuss a concept and result that has appeared in
connection with work on homotopy types of categories \cite[Lem. 4]{dyn}.

Suppose that $L_{0} \subseteq L$ is a subcomplex of a finite simplicial complex $L$. We say that
$L_{0}$ is a {\it
  full subcomplex} of $L$ if the following conditions hold:
\begin{itemize}
\item[1)] $L_{0}$ is path-closed in $L$, in the sense that, if there is a path 
\begin{equation*}
v=v_{0} \to v_{1} \to \dots \to
  v_{n}=v' 
\end{equation*}
in $L$ between vertices $v,v'$ of $L_{0}$, then all $v_{i} \in
  L_{0}$,
\item[2)]
if all the vertices of a simplex
$\sigma \in L$ are in $L_{0}$ then the simplex $\sigma$ is in $L_{0}$.
\end{itemize}

\begin{lemma}\label{lem 2}
Suppose that $L_{0}$ is a full subcomplex of $L$. Then the functor $P(L_{0})
\to P(L)$ is fully faithful.
\end{lemma}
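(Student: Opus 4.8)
The plan is to reduce to the two-dimensional case and then exploit the explicit generators-and-relations description of the path category. First I would invoke Lemma \ref{lem 1} to replace $L$ and $L_{0}$ by their $2$-skeleta: passing to $\sk_{2}$ leaves $P$ unchanged, and $\sk_{2}(L_{0})$ is again a full subcomplex of $\sk_{2}(L)$ (paths and vertices are unaffected by truncation, so condition 1) persists, and a simplex of $\sk_{2}(L)$ with vertices in $L_{0}$ is a simplex of $L$ of dimension at most $2$, hence lands in $\sk_{2}(L_{0})$ by condition 2)). It thus suffices to treat complexes of dimension at most $2$. In this situation $P(L)$ has the vertices of $L$ as objects, is freely generated by the non-degenerate $1$-simplices, and is subject to the relations $[y,z]\circ[x,y]=[x,z]$ coming from the $2$-simplices $[x,y,z]$ of $L$ (with $x<y<z$ in the ambient ordering). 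A morphism of $P(L)$ is then an equivalence class of directed edge-paths, where two paths are identified exactly when they are joined by a finite sequence of elementary moves, each move replacing a consecutive substring $x\to y\to z$ by the edge $x\to z$ or conversely, with $[x,y,z]$ a $2$-simplex of $L$.

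For fullness, that is surjectivity of $P(L_{0})(v,w)\to P(L)(v,w)$ for vertices $v,w$ of $L_{0}$, I would take any edge-path $v=x_{0}\to x_{1}\to\dots\to x_{k}=w$ representing a morphism of $P(L)$. Condition 1) forces every intermediate vertex $x_{i}$ to lie in $L_{0}$, and then condition 2) forces every edge $[x_{i-1},x_{i}]$, having both endpoints in $L_{0}$, to lie in $L_{0}$. Hence the path already lives in $L_{0}$ and represents a morphism of $P(L_{0})$ mapping to the given one.

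For faithfulness I would suppose two edge-paths $\omega,\omega'$ in $L_{0}$ become equal in $P(L)$, so that they are connected by a finite sequence of elementary moves through intermediate paths $\eta$ in $L$. Each such $\eta$ runs from $v$ to $w$, so by condition 1) every vertex occurring in any $\eta$ lies in $L_{0}$. Consequently the three vertices $x,y,z$ of each $2$-simplex invoked in a move are all in $L_{0}$, whence that simplex lies in $L_{0}$ by condition 2), and the move is legitimate inside $L_{0}$. The whole chain of moves therefore takes place within $L_{0}$, so $\omega$ and $\omega'$ are already equal in $P(L_{0})$.

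The main obstacle I anticipate is justifying the elementary-move description of the congruence defining $P(L)$: one must know that the relation identifying two edge-paths is generated by single triangle-substitutions applied to consecutive substrings, with no hidden moves that might temporarily wander outside $L_{0}$. This follows from $P$ being left adjoint to the nerve together with the standard presentation in the $2$-truncated case, but it is the step that needs care. Once that combinatorial presentation is in hand, the two conditions defining fullness slot in directly: condition 1) keeps every vertex of every intermediate path inside $L_{0}$, and condition 2) then promotes the relevant edges and triangles into $L_{0}$.
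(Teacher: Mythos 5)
Your proposal is correct and follows essentially the same route as the paper: both arguments rest on the presentation of $P(L)$ as the free category on the graph $\sk_{1}(L)$ modulo relations given by the $2$-simplices, with condition 1) trapping all intermediate vertices of paths (and of every path occurring in a chain of elementary moves) inside $L_{0}$, and condition 2) then promoting the relevant edges and $2$-simplices into $L_{0}$. Your write-up merely makes explicit two points the paper leaves implicit, namely the reduction to the $2$-skeleton and the elementary-move description of the congruence, both of which are exactly what the paper's citation to the presentation result is invoked for.
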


Recall that a functor $F: C \to D$ is {\it fully faithful} if all
induced functions
\begin{equation*}
  f: C(x,y) \to D(f(x),f(y))
\end{equation*}
of morphism sets are bijections.

The proof of Lemma \ref{lem 2} follows from the fact that the path
category $P(L)$ is constructed by taking the category freely
associated to the graph given by the $1$-skeleton $\sk_{1}(L)$, modulo
relations defined by $2$-simplices of $L$ \cite{pathcat}. The
conditions imply that every path in $L$ between vertices $v,w$ of
$L_{0}$ consists of simplices which are in $L_{0}$, and that all
$2$-simplices which define relations of paths in $L$ between $v,w \in
L_{0}$ are also in $L_{0}$.

\begin{example}\label{ex 4}
The inclusions $d^{0}: \partial\Delta^{2} \subseteq
  \Lambda^{3}_{0}$ and $d^{3}: \partial\Delta^{2} \subseteq
  \Lambda^{3}_{3}$ induced by the respective cofaces $\Delta^{2}
  \subseteq \Delta^{3}$ both define full subcomplexes.

  In the first case, an argument on orientation says that no path in
  $\Lambda^{0}_{0}$ that starts and ends in the set of vertices $\{
  1,2,3 \}$ can pass through the vertex $0$. The second case is
  similar.
\end{example}

\begin{example}
Suppose that $i \leq j$ in $\mathbf{n}$ and suppose that $L \subseteq
\Delta^{n}$. $L[i,j]$ is the subcomplex of $L$ such that $\sigma \in
L[i,j]$ if and only if all vertices of $\sigma$ are in the interval
$[i,j]$ of vertices $v$ such that $i \leq v \leq j$. Then $L[i,j]$ is a
full subcomplex of $L$.
\end{example}

\begin{example}
Suppose that $v \leq w$ are vertices of $L \subseteq
  \Delta^{n}$. Let $L(v,w)$ be the subcomplex of $L$ consisting of
  simplices whose vertices appear on a path from $v$ to $w$. Then
  $L(v,w)$ is a full subcomplex of $L$, and of $L[v,w]$.
\end{example}

\section{Sources and sinks}

A vertex $v$ is a {\it source} of $L$ if there are no non-degenerate
$1$-simplices $u \to v$ in $L$.  The vertex $z$ is a {\it sink} of $L$
if there are no non-degenerate $1$-simplices $z \to w$ in $L$.

Every finite simplicial complex $L \subset \Delta^{n}$ has at least
one source and one sink. These are the smallest and largest vertices
of $L$, respectively, in the totally ordered set of vertices of the
ambient simplex $\Delta^{n}$.

Observe that $0$ is a source of $\Lambda^{3}_{0}$ and $3$ is a sink of
$\Lambda^{3}_{3}$. The following result formalizes the assertions made
in Example \ref{ex 4} above:

\begin{lemma}\label{lem 3}
Suppose that $S$ is a subset of the vertices of $L \subseteq \Delta^{n}$
which consists of sources and sinks. Let $L(-S)$ be the subcomplex of
$L$ which consists of simplices which do not have a vertex in
$S$. Then $L(-S)$ is a full subcomplex of $L$.
\end{lemma}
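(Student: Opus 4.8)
The plan is to verify that $L(-S)$ satisfies the two conditions in the definition of a full subcomplex. Since fullness then delivers full faithfulness of $P(L(-S)) \to P(L)$ directly via Lemma \ref{lem 2}, the entire task reduces to checking path-closure (condition 1) and the vertex-closure condition on simplices (condition 2). The key structural facts I would lean on are that $S$ consists only of sources and sinks, and that $L \subseteq \Delta^{n}$ inherits a total order on vertices from the ambient simplex, so every $1$-simplex $u \to u'$ in $L$ has $u < u'$.

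First I would dispose of condition 2, which is nearly immediate: a simplex $\sigma$ of $L$ lies in $L(-S)$ exactly when none of its vertices is in $S$. So if all vertices of $\sigma$ lie in $L(-S)$, then by definition none of them is in $S$, and hence $\sigma \in L(-S)$. (Here I am using that $L(-S)$ is defined by a condition purely on vertices, which makes it automatically ``vertex-determined.'') This step requires no use of the source/sink hypothesis.

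The substance is condition 1, path-closure. Suppose
\begin{equation*}
v = v_{0} \to v_{1} \to \dots \to v_{m} = v'
\end{equation*}
is a path in $L$ with endpoints $v, v' \in L(-S)$, i.e. $v, v' \notin S$. I must show every intermediate $v_{i} \notin S$. The idea is to exploit the defining property of sources and sinks together with the ambient ordering. Because each arrow $v_{i} \to v_{i+1}$ comes from a nondegenerate $1$-simplex, we have $v_{i} < v_{i+1}$, so the path is strictly increasing and in particular $v$ is not the maximal and $v'$ is not the minimal vertex on it. Now suppose some interior vertex $v_{i}$ (with $0 < i < m$) were in $S$. If $v_{i}$ is a source, then there can be no nondegenerate $1$-simplex $v_{i-1} \to v_{i}$, contradicting the existence of the $i$-th arrow (since $i \geq 1$ means there is an incoming edge); if $v_{i}$ is a sink, then there can be no nondegenerate $1$-simplex $v_{i} \to v_{i+1}$, contradicting the $(i+1)$-st arrow (since $i \leq m-1$ means there is an outgoing edge). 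Either way we reach a contradiction, so no interior vertex lies in $S$. It remains to handle the endpoints, but these are excluded by hypothesis, so all $v_{i} \notin S$ and the path lies in $L(-S)$.

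The main obstacle, such as it is, lies in being careful about the boundary cases of the path: a source can legitimately appear as the first vertex $v_{0}$ and a sink as the last vertex $v_{m}$, so the argument above only rules out sources and sinks strictly inside the path. This is exactly why the hypothesis $v, v' \notin S$ is needed and is what makes the statement true --- the endpoints are assumed out of $S$, and every interior vertex is forced out of $S$ because it has both an incoming and an outgoing edge, which no source or sink can possess. Once both conditions are established, fullness holds and Lemma \ref{lem 2} finishes the proof.
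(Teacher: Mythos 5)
Your proof is correct and follows essentially the same route as the paper: condition 2 is immediate from the vertex-determined definition of $L(-S)$, and condition 1 holds because every interior vertex of a path has both an incoming and an outgoing non-degenerate $1$-simplex, so it can be neither a source nor a sink. You simply spell out in more detail the paper's terse observation that ``no intermediate object $v_{i}$ can be a source or a sink.''
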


\begin{proof}
Suppose that $v < v'$ are vertices of $L(-S)$ and suppose that the
string of $1$-simplices
\begin{equation*}
v=v_{0} \to v_{1} \to \dots \to v_{n}=v'
\end{equation*}
is a path of $L$ from $v$ to $w$ consisting of non-degenerate
$1$-simplices. Then
no intermediate object $v_{i}$, $1 \leq i
\leq n-1$ can be a source or a sink. It follows that all $v_{i} \in
L(-S)$.

A simplex $\sigma$ of $L$ is in $L(-S)$ if and only if none of its
vertices are in $S$, by definition.
\end{proof}

\begin{example}
Suppose that $L$ is the complex
\begin{equation*}
\xymatrix@=12pt{
&&& v_{3}   \\
v_{0} && v_{2} \ar[ur]  && v_{4} \ar[ul] \\
& v_{1} \ar[ul] \ar[ur]
}
\end{equation*}
The set $S = \{v_{1},v_{3}\}$ consists of sources and sinks, and
$L(-S)$ is discrete on the vertices $v_{0},v_{2},v_{4}$. The isolated
point $v_{2}$ is a source and a sink for $L(-S)$. Let $S'= \{ v_{2}
\}$. Then
\begin{equation*}
P(L)(v_{0},v_{4}) = P(L(-S))(v_{0},v_{4}) = P(L(-S)(-S'))(v_{0},v_{4}) 
= \emptyset.
\end{equation*}

Thus, removing sources and sinks can create new
ones. The process of removing sources and sinks relative to a pair of
vertices $v,w$ of $L$ must stop, since $L$ is finite.
\end{example}

\begin{lemma}\label{lem 5}
Suppose that $v < w$ in $L$ and that $S$ consists of sources and sinks
of $L$ which are distinct from $v$ and $w$. Then
\begin{equation*}
L(-S)(v,w) = L(v,w).
\end{equation*}
\end{lemma}

\begin{proof}
Suppose that 
\begin{equation*}
\sigma: v=v_{0} \to v_{1} \to \dots \to v_{n}=w
\end{equation*}
is a path from $v$ to $w$ in $L$. Then each intermediate vertex
$v_{i}$ is neither a source or a sink, and is therefore not in $S$, so
that $v_{i} \in L(-S)$. The subcomplex $L(-S)$ is full so that the
path $\sigma$ is in $L(-S)$.

Thus, every vertex of $L(v,w)$ is a vertex of
$L(-S)(v,w)$, so that the two complexes have the same set of vertices.
These are full subcomplexes of $L$ having the same sets of vertices,
so that the inclusion
\begin{equation*}
  L(-S)(v,w) \subseteq L(v,w)
\end{equation*}
is an identity.
\end{proof}

\begin{lemma}\label{lem 6}
Suppose that $v \leq w$ in $L$, where $v$ is a source and $w$ is a
sink. Suppose given complexes
\begin{equation*}
L_{n} \subseteq L_{n-1} \subseteq \dots \subseteq L_{0}=L
\end{equation*}
where $v,w \in L_{i+1} = L_{i}(-S_{i})$ and $S_{i}$ is some set of sources and
sinks in $L_{i}$. Suppose that $L_{n}$ has a unique source $v$ and a unique
sink $w$. Then $L_{n}=L(v,w)$.
\end{lemma}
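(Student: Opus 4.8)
The plan is to combine two ingredients: an iterated application of Lemma \ref{lem 5} to identify the path-subcomplexes $L_{i}(v,w)$ all the way down the tower, together with a direct argument that any finite complex with a unique source and a unique sink coincides with its own subcomplex of path-supported simplices.

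First I would check that Lemma \ref{lem 5} applies at each stage. Since $v,w \in L_{i+1} = L_{i}(-S_{i})$, neither $v$ nor $w$ is removed, so $S_{i}$ consists of sources and sinks of $L_{i}$ that are distinct from $v$ and $w$, and $v < w$ are vertices of $L_{i}$. Lemma \ref{lem 5} then gives
\[
L_{i+1}(v,w) = L_{i}(-S_{i})(v,w) = L_{i}(v,w)
\]
for each $i$. Iterating down the tower yields
\[
L_{n}(v,w) = L_{n-1}(v,w) = \dots = L_{0}(v,w) = L(v,w).
\]

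The remaining and essential step is to prove $L_{n} = L_{n}(v,w)$, using that $L_{n}$ has $v$ as its \emph{unique} source and $w$ as its \emph{unique} sink. For this I would show that every vertex $u$ of $L_{n}$ lies on a path from $v$ to $w$. If $u$ is not a source, there is a non-degenerate $1$-simplex $u' \to u$ with $u' < u$ in the order of $\Delta^{n}$; repeating, the vertex labels strictly decrease, so finiteness of $L_{n}$ forces the backward extension to terminate at a source, which must be $v$. This produces a path $v \to \dots \to u$. Dually, extending forward through non-degenerate $1$-simplices of strictly increasing label terminates at the unique sink $w$, giving $u \to \dots \to w$. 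Concatenating exhibits $u$ on a path $v \to \dots \to u \to \dots \to w$, so $u$ is a vertex of $L_{n}(v,w)$.

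Finally, $L_{n}(v,w)$ is a full subcomplex of $L_{n}$, and the previous step shows it has the same vertex set as $L_{n}$. By condition (2) of fullness, any simplex of $L_{n}$ has all of its vertices in $L_{n}(v,w)$ and therefore lies in $L_{n}(v,w)$; thus the inclusion $L_{n}(v,w) \subseteq L_{n}$ is an identity. Combining this with the iteration gives $L_{n} = L_{n}(v,w) = L(v,w)$. The main obstacle is the middle step: the termination of the backward and forward extension arguments, which relies crucially on the total order on the vertices of the ambient simplex $\Delta^{n}$ (so that non-degenerate $1$-simplices strictly increase labels) together with the finiteness of $L_{n}$. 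The degenerate case $v = w$ is handled separately and trivially, since then $L_{n}$ and $L(v,w)$ both reduce to the single vertex $v$.
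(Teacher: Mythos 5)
Your proof is correct and takes essentially the same route as the paper: extend backward from any vertex to the unique source $v$ and forward to the unique sink $w$ (termination by finiteness) to conclude that every vertex lies on a path from $v$ to $w$, hence $L_{n} = L_{n}(v,w)$, and then apply Lemma \ref{lem 5} down the tower to identify $L_{n}(v,w)$ with $L(v,w)$. Your explicit appeal to the total order on the vertices of $\Delta^{n}$ to justify termination, and your explicit iteration of Lemma \ref{lem 5}, are if anything slightly more careful than the paper's own wording.
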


\begin{proof}
  The connected component of $v$ in $L_{n}$ has a sink, which must be
  $w$. All other components would have sources and sinks, and must therefore
  be empty. It follows that $L_{n}$ is connected.
  
 If $L_{n}$ has a vertex $x$ other than $v,w$ then there are non-degenerate $1$-simplices
\begin{equation*}
  a_{1} \to x \to b_{1}.
\end{equation*}
If $a_{1}$ is a source then $a_{1}=v$. Otherwise, there is a $1$-simplex $a_{2} \to a_{1}$. This procedure must stop, to produce a path
\begin{equation*}
  v = a_{r} \to \dots \to a_{2} \to a_{1} \to x.
\end{equation*}
Similarly, there is a path
\begin{equation*}
  x \to b_{1} \to b_{2} \to \dots \to b_{s} = w.
\end{equation*}

If $L_{n}$ has no vertices other than $v,w$, then $L_{n}$ consists of
the $1$-simplex $v \to w$.

It follows that every vertex of $L_{n}$ is on a path from $v$ to $w$,
so that $L_{n}(v,w) = L_{n}$. Then Lemma \ref{lem 5}
implies that $L_{n}(v,w) = L(v,w)$, so that $L_{n} = L(v,w)$.
\end{proof}

Suppose that $v \leq w$ in $L$, and start with
$L_{0}=L[v,w]$. Let $S_{0}$ be the set of all sources and sinks of
$L_{0}$, except for the elements $v,w$, and set $L_{1} =
L_{0}(-S_{0})$. Repeat this procedure inductively to produce a
descending chain of complexes
\begin{equation*}
  L_{n} \subseteq L_{n-1} \subseteq \dots \subseteq L_{0}=L[v,w],
\end{equation*}
with $S_{n} = \emptyset$. Then
\begin{equation*}
  L_{n} = L[v,w](v,w) = L(v,w),
\end{equation*}
by Lemma \ref{lem 6}.

In other words, starting with the full subcomplex $L[v,w]$ we can
successively delete sources and sinks to produce $L(v,w)$, which is
the minimal full subcomplex of $L$ that computes $P(L)(v,w)$.

\section{Corners}

Suppose that $i: K \subseteq \square^{n}$ is a finite cubical
complex. The inclusion $i$ induces a functor
\begin{equation*}
  i_{\ast}: P(K) \to P(\square^{n}) = \mathcal{P}(\underline{n}).
\end{equation*}
There is a poset map $t: \mathcal{P}(\underline{n}) \to \mathbb{N}$ that
is defined by cardinality, in the sense that
\begin{equation*}
  F \mapsto t(F) = \vert F \vert
  \end{equation*}
for all subsets $F$ of $\underline{n}$.
The composite functor
\begin{equation*}
P(K) \xrightarrow{i_{\ast}} \mathcal{P}(\underline{n}) \xrightarrow{t} \mathbb{N}
\end{equation*}
will also be denoted by $t$.

One thinks of the functor $t$ as a sort of time parameter for
$K$. This functor also behaves like a total degree.

\medskip

Suppose that $x$ is a vertex of the finite cubical complex $K$. Say
that $x$ is a {\it corner} if it belongs to only one maximal
cell of $K$.

The following result was proved by M. Misamore in \cite{Mis-path}. The
proof that is given here is quite different.

\begin{lemma}\label{lem 7}
  Suppose that $x$ is a corner of $K$, and let $K_{x}$ be the
  subcomplex of cells which do not have $x$ as a vertex. Then the
  functor
  \begin{equation*}
    P(K_{x}) \to P(K)
  \end{equation*}
  is fully faithful.
\end{lemma}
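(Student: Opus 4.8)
The plan is to exhibit $K$ as a pushout of cubical complexes whose ``new'' piece is a single cube, and then to transport that pushout through the path-category construction. Since $x$ is a corner it lies in a unique maximal cell $C = [A_{0},B_{0}]$, and any cell of $K$ having $x$ as a vertex is a subinterval of $C$: such a cell sits inside some maximal cell, which then contains $x$ and so equals $C$. Writing $C_{x} = K_{x}\cap C$ for the cells of $C$ not containing $x$, a check on cells gives $K = K_{x}\cup C$ and $K_{x}\cap C = C_{x}$, so that
\begin{equation*}
  K = K_{x}\cup_{C_{x}} C
\end{equation*}
is a pushout of cubical complexes along the inclusions $C_{x}\subseteq K_{x}$ and $C_{x}\subseteq C$.

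Now I would apply the two colimit-preserving functors in play. The triangulation $X \mapsto \vert X\vert$ is a realization and hence a left adjoint, and $P$ is by definition left adjoint to the nerve, so the composite $P\vert\cdot\vert$ carries the pushout above to a pushout of small categories
\begin{equation*}
  P(K) = P(K_{x})\cup_{P(C_{x})} P(C).
\end{equation*}
Since $C$ is a full cube, $P(C) = [A_{0},B_{0}]$ is a poset, hence a thin category in which there is exactly one morphism $a \to b$ when $a \le b$ and none otherwise.

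The local input I would then prove is that the functor $P(C_{x})\to P(C)$ is a full embedding --- fully faithful and injective on objects --- identifying $P(C_{x})$ with the full subposet of $[A_{0},B_{0}]$ on the vertices distinct from $x$. Injectivity on objects is clear, as is the fact that a morphism $a\to b$ in $P(C_{x})$ can exist only if $a\le b$. Conversely, when $a\le b$ with $a,b\neq x$ there is a monotone path from $a$ to $b$ inside $C_{x}$: if $x$ lies strictly between $a$ and $b$ then the subcube $[a,b]$ has dimension at least $2$, leaving room to route the path around $x$. The crux, and the step I expect to be the main obstacle, is \emph{thinness} of $P(C_{x})$: any two such $x$-avoiding monotone paths must be connected using only the squares of $C$ that avoid $x$. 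This is a finite, self-contained statement about one cube with a single vertex deleted, which I would settle by induction on $\dim C$, sliding the two paths together by square moves while always keeping dimension-$\ge 2$ room to sidestep $x$.

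Finally I would invoke a pushout principle in $\mathbf{Cat}$: the pushout of a full embedding along an arbitrary functor is again a full embedding. Applied to the pushout square above, whose top edge $P(C_{x})\to P(C)$ is a full embedding by the previous paragraph, this yields that $P(K_{x})\to P(K)$ is a full embedding, which is precisely the assertion of the lemma. I would justify the principle through the word (zigzag) description of a pushout of categories: a morphism of $P(K)$ between vertices of $K_{x}$ that strays into $P(C)$ must enter and leave along objects of $C_{x}$, and full faithfulness of $P(C_{x})\to P(C)$ forces the intervening $C$-portion to come already from $C_{x}\subseteq K_{x}$; hence gluing in $P(C)$ neither creates new morphisms nor identifies distinct ones between objects of $K_{x}$.
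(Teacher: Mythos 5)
Your route is genuinely different from the paper's, and its skeleton is sound. The paper never forms a pushout: it builds an explicit retraction $\psi_{\ast}\colon P(K)(u,v) \to P(K_{x})(u,v)$ by noting that cardinality increases strictly along a nondegenerate path, so a path meets the corner $x$ at most once, with both adjacent edges $u_{i-1} \to x \to u_{i+1}$ lying in the unique maximal cell; the square in that cell with fourth vertex $u_{i-1}\cup\{b\}$ reroutes the path around $x$, and one checks compatibility with the $2$-cell relations. Your decomposition $K = K_{x}\cup_{C_{x}}C$ is correct (every cell with vertex $x$ lies in the unique maximal cell $C$), the composite $P\vert\cdot\vert$ does carry it to a pushout in $\mathbf{Cat}$ since both functors are left adjoints, and the stability principle --- cobase change of a fully faithful, injective-on-objects functor along an arbitrary functor is again fully faithful and injective on objects --- is true, by essentially the run-reduction argument on zigzag words that you sketch; but since pushouts in $\mathbf{Cat}$ are in general badly behaved, that principle is not citable folklore and needs its own careful proof (the reduction of the $P(C)$-runs is canonical only because $P(C_{x})\to P(C)$ is faithful and injective on objects, not merely full). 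What your approach buys is modularity: all combinatorics is concentrated in a single punctured cube, and you obtain the paper's Corollary \ref{cor 8} first, with Lemma \ref{lem 7} as a consequence, reversing the paper's logical order.

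The genuine gap is the step you yourself flag as the crux: thinness of $P(C_{x})$, i.e. that any two monotone $x$-avoiding paths from $a$ to $b$ are connected by square moves using only squares avoiding $x$. ``Induction on $\dim C$, keeping room to sidestep $x$'' is a hope, not an argument, and in your setup this claim carries the entire content of the lemma --- everything else is formal. It can be proved in a paragraph: if $x \notin [a,b]$, every cell of $[a,b]$ avoids $x$, and thinness is that of the full cube, where $P$ of a cube is the poset $\mathcal{P}(\underline{k})$; if $a \subsetneq x \subsetneq b$, choose $e \in b\setminus x$, and observe that the square move transporting the step that adjoins $e$ one position earlier along a given path has three of its vertices on that path and fourth vertex containing $e$, hence avoids $x$; iterating, every $x$-avoiding path is square-equivalent in $C_{x}$ to one adjoining $e$ first, and any two such paths have their tails in the cube $[a\cup\{e\},b]$, all of whose cells contain $e$ and therefore avoid $x$, reducing again to the full-cube case. (Both your argument and the paper's also quietly use that $P(K)$ is presented by cubical $1$-cells modulo $2$-cells rather than by all $1$-simplices of the triangulation; that reduction is standard but deserves a sentence.) With those pieces written out your proof is complete; as it stands, it is a correct strategy rather than a proof.
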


\begin{proof}
  Suppose that $\sigma$ is the unique top cell containing $x$.
  
  If $x$ is either maximal or minimal in $\sigma$, then $x$ is either
  a sink or a source, respectively, by the uniqueness of $\sigma$. In
  that case, the functor $P(K_{x}) \to P(K)$ is fully faithful, by
  Lemma \ref{lem 3}.
 
  Suppose that $x$ is neither maximal nor minimal in $\sigma$, and
  suppose that $P$ is a non-degenerate path in $K$ which passes
  through $x$, as in
\begin{equation*}
  P:\ u = u_{0} \to \dots \to u_{n} = v,
\end{equation*}
where $u,v \in K_{x}$, and $u_{i} = x$. Then $i \ne 0,n$, and there is
a unique $i$ such that $u_{i} = x$. In effect, since $P$ is
non-degenerate, it induces a system of proper inequalities
\begin{equation*}
\vert u \vert = \vert u_{0} \vert < \vert u_{1} \vert < \dots < \vert x \vert < \dots < \vert u_{n} \vert = \vert v \vert,
\end{equation*}
in which the number $\vert x \vert$ can only appear once.

Then
$u_{i-1}$ and $u_{i+1}$ are in $K_{x}$, and both $1$-simplices
$u_{i-1} \to x$ and $x \to u_{i+1}$ are in $\sigma$ since $\sigma$ is
the unique maximal cell that contains $x$.

Write $\sigma = [A,B]$.

As subsets of $B \subseteq \underline{n}$, $x = u_{i-1} \cup \{ a \}$ and $u_{i+1} = x \cup \{ b \}$, where $a$ and $b$ are distinct. The resulting $2$-cell
\begin{equation}\label{eq 2}
  \xymatrix{
    u_{i-1} \ar[r] \ar[d] \ar@{.>}[dr] & x \ar[d] \\
    u_{i-1} \cup \{ b \} \ar[r] & u_{i+1}
  }
  \end{equation}
in $\sigma$ (hence in $K$) defines a morphism $u_{i-1} \to u_{i}$ in $P(K_{x})$. Define $\psi(P)$ to be the composite of the morphisms
\begin{equation*}
  u = u_{0} \to \dots \to u_{i-1} \to u_{i+1} \to \dots \to u_{n} = v
\end{equation*}
in $P(K_{x})(u,v)$.

The $2$-cell of the picture (\ref{eq 2}) is uniquely determined by the path $P$, as is its image $\psi(P)$.

If $Q: u \to v$ is a non-degenerate path which
does not pass through $x$, let $\psi(Q)$ be the image of $Q$ in
$P(K_{x})(u,v)$. We have therefore determined a function
\begin{equation*}
  \psi: \{ \text{paths}\ u \to v \} \to P(K_{x})(u,v).
\end{equation*}
If there is a $2$-cell between paths $u \to v$ in $K$, then the corresponding images under $\psi$ coincide. We therefore have an induced function
\begin{equation*}
  \psi_{\ast}: P(K)(u,v) \to P(K_{x})(u,v).
\end{equation*}
The composite
\begin{equation*}
  P(K_{x})(u,v) \to P(K)(u,v) \xrightarrow{\psi_{\ast}} P(K_{x})(u,v)
\end{equation*}
is the identity by construction. The construction of $\psi(P)$ for paths $P$ passing through $x$ shows that the function
\begin{equation*}
  P(K_{x})(u,v) \to P(K)(u,v)
\end{equation*}
is surjective, and is therefore a bijection.
\end{proof}

Suppose that $x \subseteq \underline{n}$. Then $x$ is an object of the
poset $\mathcal{P}(\underline{n})$ and is a vertex of the simplicial
set $B\mathcal{P}(\underline{n})$.

Let $\square^{n}_{x}$ be the cubical subcomplex of $\square^{n}$
consisting of cells which do not have $x$ as a vertex.

Let $D_{x}$ be the subcomplex of $B\mathcal{P}(\underline{n})$
consisting of those simplices which do not have $x$ as a vertex. 
$D_{x}$ is the nerve $B\mathcal{P}(\underline{n})_{x}$ of
the full subcategory of $\mathcal{P}(\underline{n})$ with objects not
equal to $x$. In particular, the functor $P(D_{x}) \to P(\square^{n})$ is
fully faithful.

The isomorphism $\vert \square^{n} \vert \cong
B\mathcal{P}(\underline{n})$ restricts to a monomorphism of simplicial
complexes
\begin{equation*}
\gamma: \vert \square^{n}_{x} \vert \to D_{x}.
\end{equation*}

Observe that if $x$ is neither the minimal element $\emptyset$ nor maximal element
$\underline{n}$ of $\mathcal{P}(\underline{n})$, then $\emptyset
\subseteq \underline{n}$ is a $1$-simplex of $D_{x}$ which cannot be in
the image of the map $\gamma$.

If $x$ is either the maximal or minimal element of
$\mathcal{P}(\underline{n})$, then the map $\gamma$ is an
isomorphism. In effect, if $x = \underline{n}$, then a simplex $F_{0}
\subseteq \dots \subseteq F_{k}$ is in $D_{x}$ if and only if $F_{k} \ne
\underline{n}$, and in this case it is in the image of the cell $\vert
          [\emptyset,F_{k}] \vert$. The case $x = \emptyset$ is argued
          similarly.

\begin{corollary}\label{cor 8}
The functor $P(\square^{n}_{x}) \to P(\square^{n})$ is fully faithful,
and the induced functor
\begin{equation*}
\gamma_{\ast}: P(\vert \square^{n}_{x} \vert) \to P(D_{x})
\end{equation*}
is an isomorphism of path categories.
\end{corollary}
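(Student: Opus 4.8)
The plan is to prove the two assertions in turn: the full faithfulness of $P(\square^{n}_{x}) \to P(\square^{n})$ will come directly from Lemma \ref{lem 7}, and the isomorphism statement will then follow from a formal comparison of two fully faithful functors into $P(\square^{n})$.

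For the first assertion, I would begin by observing that $x$ is automatically a corner of $\square^{n}$. The only maximal cell of $\square^{n}$ is the top cell $[\emptyset,\underline{n}] = \mathcal{P}(\underline{n})$ itself, so every vertex of $\square^{n}$ belongs to exactly one maximal cell. Since $\square^{n}_{x}$ is precisely the subcomplex $K_{x}$ of Lemma \ref{lem 7} in the case $K = \square^{n}$, that lemma applies and gives that $P(\square^{n}_{x}) \to P(\square^{n})$ is fully faithful.

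For the second assertion, I would apply $P$ to the commutative square of simplicial set maps in which $\gamma$ sits, namely the restriction of $\vert\square^{n}\vert \cong B\mathcal{P}(\underline{n})$ along the inclusions $\vert\square^{n}_{x}\vert \hookrightarrow \vert\square^{n}\vert$ and $D_{x}\hookrightarrow B\mathcal{P}(\underline{n})$. Using $P(\square^{n}_{x}) = P(\vert\square^{n}_{x}\vert)$, this produces a commuting triangle
\begin{equation*}
P(\square^{n}_{x}) \xrightarrow{\gamma_{\ast}} P(D_{x}) \xrightarrow{j} P(\square^{n}),
\end{equation*}
whose composite $j\circ\gamma_{\ast}$ is the fully faithful functor produced in the previous paragraph. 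The functor $j$ is fully faithful as well, since it is the inclusion of a full subcategory of $\mathcal{P}(\underline{n})$, as recorded in the text preceding the statement. It remains to deduce that $\gamma_{\ast}$ is an isomorphism. On objects $\gamma_{\ast}$ is a bijection, because the objects of both path categories are the subsets $F \ne x$ and $\gamma$ is the identity on vertices. On morphisms, fix vertices $u,v \ne x$; then $j\circ\gamma_{\ast}$ and $j$ both restrict to bijections on the relevant hom-sets, so $\gamma_{\ast} = j^{-1}\circ(j\circ\gamma_{\ast})$ is a bijection there too. Hence $\gamma_{\ast}$ is bijective on objects and fully faithful, i.e. an isomorphism of categories.

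The step requiring the most care is the bookkeeping around the fact that $\gamma$ is itself \emph{not} an isomorphism of simplicial sets: as already noted, when $x$ is neither $\emptyset$ nor $\underline{n}$ the $1$-simplex $\emptyset \subseteq \underline{n}$ lies in $D_{x}$ but not in the image of $\gamma$. The content of the corollary is precisely that these additional simplices of $D_{x}$ do not enlarge the path category, and the argument above forces this through the two comparisons to $P(\square^{n})$ rather than through any direct analysis of $\gamma$. I would therefore take care to base the conclusion entirely on the triangle above, resisting the temptation to construct an inverse to $\gamma_{\ast}$ by hand.
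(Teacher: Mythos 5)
Your proposal is correct and follows essentially the same route as the paper: full faithfulness of $P(\square^{n}_{x}) \to P(\square^{n})$ via Lemma \ref{lem 7}, and then the observation that $\gamma_{\ast}$ is bijective on vertices and fully faithful (by cancellation against the fully faithful functor $P(D_{x}) \to P(\square^{n})$ coming from $D_{x}$ being the nerve of a full subcategory), hence an isomorphism of categories. Your write-up merely makes explicit two points the paper leaves implicit, namely that every vertex of $\square^{n}$ is a corner and the precise two-out-of-three argument in the triangle.
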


\begin{proof}
The functor
\begin{equation*}
i_{\ast}: P(\square^{n}_{x}) \to P(\square^{n})
\end{equation*}
is fully faithful by Lemma \ref{lem 7}.

  The functor $\gamma_{\ast}$ is bijective on vertices, and is also
  fully faithful by the previous paragraph. It is therefore an
  isomorphism of categories as claimed.
\end{proof}

\begin{example}
  The Swiss flag ($2$-cells indicated by double arrows, centre region is empty)
  \begin{equation*}
\xymatrix@=10pt{
\bullet \ar[r] \ar@{=>}[dr]& \bullet \ar[r] & \bullet \ar[r] \ar@{=>}[dr] 
& \bullet \\
\bullet \ar[r] \ar[u] & \ast \ar[u] & \ast \ar[u] \ar[r] & \bullet \ar[u] \\
\bullet \ar[r] \ar[u] \ar@{=>}[dr] & \ast & \ast \ar[r] \ar@{=>}[dr] 
& \bullet \ar[u] \\
\bullet \ar[r] \ar[u] & \bullet \ar[r] \ar[u] & \bullet \ar[r] \ar[u] 
& \bullet \ar[u] 
}
\end{equation*}
has six corners, one sink, and one source, aside from the initial and
terminal vertices. Remove the four ``inner'' corners to show that
there are two morphisms from the initial vertex to the terminal vertex in the
corresponding path category.
\end{example}

\section{Refinement}

Suppose that $\alpha: \mathcal{P}(\underline{m}) \to
\mathcal{P}(\underline{n})$ is a poset monomorphism that preserves
meets and joins.

Every interval $[A,B]$ in $\mathcal{P}(\underline{m})$ determines an
interval $[\alpha(A),\alpha(B)]$ in $\mathcal{P}(\underline{n})$, and
$\alpha$ restricts to a poset monomorphism $\alpha: [A,B] \to
[\alpha(A),\alpha(B)]$. The assignment
\begin{equation*}
  [A,B] \mapsto [\alpha(A),\alpha(B)]
\end{equation*}
preserves inclusion
relations between intervals, and preserves meets and
joins of intervals.

The
cubical subcomplex of $\square^{n}$ that is generated by the
intervals $[\alpha(A),\alpha(B)]$ associated to the intervals $[A,B]$
of $K$ is denoted by $K_{\alpha}$, and there is a simplicial set map
$\alpha_{\ast}: \vert K \vert \to \vert K_{\alpha} \vert$ that makes
the diagram
\begin{equation*}
  \xymatrix{
    \vert K \vert \ar[d] \ar[r]^{\alpha_{\ast}} & \vert K_{\alpha} \vert \ar[d] \\
    B\mathcal{P}(\underline{m}) \ar[r]_{\alpha} & B\mathcal{P}(\underline{n})
  }
  \end{equation*}
commute. The simplicial set map $\alpha_{\ast}$ is induced by the
restricted poset morphisms $\alpha: [A,B] \to
[\alpha(A),\alpha(B)]$. These poset morphisms are not face inclusions
in general.

Suppose that $K \subseteq \square^{m}$ and $L \subseteq \square^{n}$ are
higher dimensional automata. We say that $L$ is a {\it refinement} of $K$ if
there is a poset monomorphism $\alpha: \mathcal{P}(\underline{m}) \to
\mathcal{P}(\underline{n})$ that preserves meets and joins, and an
inclusion $i: K_{\alpha} \subseteq L$ of cubical subcomplexes of
$\square^{n}$. In this case, there is a commutative diagram of
simplicial set maps
\begin{equation*}
  \xymatrix{
    \vert K \vert \ar[d] \ar[r]^{\alpha_{\ast}} & \vert K_{\alpha} \vert \ar[d] \ar[r]^{i_{\ast}}
    & \vert L \vert \ar[dl] \\
    B\mathcal{P}(\underline{m}) \ar[r]_{\alpha} & B\mathcal{P}(\underline{n})
  }
  \end{equation*}

\begin{lemma}\label{lem 4x}
Suppose that $\alpha: \mathcal{P}(\underline{m}) \to
\mathcal{P}(\underline{n})$ is a poset monomorphism which preserves meets
and joins, and suppose that $K \subseteq \square^{r}$ is a cubical
subcomplex.

Then the induced functor $\alpha_{\ast}: P(K) \to P(K_{\alpha})$ is fully faithful.
  \end{lemma}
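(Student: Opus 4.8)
The plan is to produce a one-sided inverse to $\alpha_{\ast}$ on path categories and then upgrade this to a statement about morphism sets. The key observation is that a monomorphism $\alpha$ preserving meets and joins is an order \emph{embedding}: if $\alpha(F) \subseteq \alpha(F')$ then $\alpha(F \cap F') = \alpha(F) \cap \alpha(F') = \alpha(F)$, so $F \subseteq F'$ by injectivity. Preservation of joins then lets me define
\[
  \rho(T) = \bigcup \{\, F \subseteq \underline{m} \ :\ \alpha(F) \subseteq T \,\},
\]
the largest subset with $\alpha(F) \subseteq T$ (a right adjoint to $\alpha$). This $\rho$ is order preserving and satisfies $\rho\alpha = \mathrm{id}$ together with $\alpha\rho(T) \subseteq T$ for all $T$, and it is the engine of the proof.

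First I would check that $\rho$ respects the cubical structure. Every cell of $K_{\alpha}$ is a subinterval $[S,S']$ of a generating interval $[\alpha(A),\alpha(B)]$, and from $\alpha(A) \subseteq T \subseteq \alpha(B)$ one reads off $A \subseteq \rho(T) \subseteq B$; hence $\rho$ carries $[S,S']$ into the cell $[\rho(S),\rho(S')]$ of $K$. Consequently $B\rho$ restricts to a simplicial map $\rho_{\ast}: \vert K_{\alpha}\vert \to \vert K\vert$, and since $\alpha_{\ast}$ is itself $B\alpha$ restricted to $\vert K\vert$, functoriality of the path category gives $\rho_{\ast}\alpha_{\ast} = (\rho\alpha)_{\ast} = \mathrm{id}_{P(K)}$. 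In particular $\alpha_{\ast}$ is injective on objects and faithful, since $\alpha_{\ast}(f) = \alpha_{\ast}(g)$ forces $f = \rho_{\ast}\alpha_{\ast}(f) = \rho_{\ast}\alpha_{\ast}(g) = g$.

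For fullness I would exploit the interior operator $\alpha\rho \subseteq \mathrm{id}$. For any vertex $T$ of $K_{\alpha}$ lying in a cell $[\alpha(A),\alpha(B)]$, the inequalities above show $\alpha\rho(T)$ lies in the same cell, so there is a canonical morphism $\epsilon_{T}: \alpha\rho(T) \to T$ of $P(K_{\alpha})$ realized inside that cube; because $P$ of a cube is the thin poset $\mathcal{P}(\underline{k})$, this morphism is independent of the chosen cell. Now let $h \in P(K_{\alpha})(\alpha(v),\alpha(w))$ be represented by a path $\alpha(v) = T_{0} \to \dots \to T_{N} = \alpha(w)$. For each edge $T_{j} \to T_{j+1}$ the four vertices $T_{j}, T_{j+1}, \alpha\rho(T_{j}), \alpha\rho(T_{j+1})$ lie in one cube, so the square with top edge $T_{j}\to T_{j+1}$, bottom edge the image $\alpha_{\ast}\rho_{\ast}$ of that edge, and sides $\epsilon_{T_{j}}, \epsilon_{T_{j+1}}$ commutes. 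Pasting these squares, and using that $\epsilon_{\alpha(u)} = \mathrm{id}$ whenever $T = \alpha(u)$ lies in the image of $\alpha$ (since then $\alpha\rho\alpha(u) = \alpha(u)$), yields $\alpha_{\ast}\rho_{\ast}(h) = h$. Thus $h = \alpha_{\ast}(\rho_{\ast}h)$ with $\rho_{\ast}h \in P(K)(v,w)$, which is exactly fullness.

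The routine points are the adjunction identities and the verification that $\rho$ preserves cells. The step I expect to require the most care is the well-definedness of $\epsilon_{T}$, i.e. its independence of the cell used to realize it: this is precisely what makes the pasted squares fit together, and it rests on the thinness of $P(\text{cube})$ together with the fact that intersections of the generating intervals $[\alpha(A),\alpha(B)]$ are again intervals of this form (using meet and join preservation), so that the competing realizations of $\epsilon_{T}$ can be compared inside a common intersection cell. (I read the ambient cube in the statement as $\square^{m}$, matching the domain of $\alpha$.)
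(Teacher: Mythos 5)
Your proof is correct, and it is the order-dual of the paper's argument, repackaged functorially. The paper approximates each vertex $F$ of $K_{\alpha}$ from \emph{above}: using meet preservation it takes the minimal $B_{F}$ with $F \subseteq \alpha(B_{F})$, shows that each edge $F_{i} \to F_{i+1}$ of a path, together with the square whose bottom edge is $\alpha(B_{F_{i}}) \to \alpha(B_{F_{i+1}})$, lies in a common generating interval, and reads off a homotopy from the given path to one in the image of $\alpha_{\ast}$, plus a hom-set retraction $s$ with $s \cdot \alpha_{\ast} = \mathrm{id}$. You approximate from \emph{below} instead: join preservation gives the right adjoint $\rho$ with $\rho\alpha = \mathrm{id}$ and $\alpha\rho(T) \subseteq T$, and the same square-filling-inside-a-common-cell mechanism yields fullness. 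What your route buys: $\rho$ globalizes to an honest simplicial map $\rho_{\ast}: \vert K_{\alpha} \vert \to \vert K \vert$, so faithfulness is immediate from the functorial retraction $\rho_{\ast}\alpha_{\ast} = \mathrm{id}$, and fullness is exactly naturality of $\epsilon: \alpha_{\ast}\rho_{\ast} \Rightarrow \mathrm{id}$ (which is the identity at vertices $\alpha(v)$); the paper, by contrast, defines $s$ only on hom-sets and must separately verify compatibility with the $2$-simplex relations. Moreover, your argument needs only join preservation plus injectivity (the order-embedding property can be derived from joins just as from meets), while the paper's needs only meets plus injectivity, so between the two proofs one sees that either half of the hypothesis suffices. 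Two small points: your worry about well-definedness of $\epsilon_{T}$ is vacuous, since $\alpha\rho(T) \subseteq T$ is a single $1$-simplex of $\vert K_{\alpha} \vert$, determined by its endpoints and lying in the cell $[\alpha\rho(T),T] \subseteq [\alpha(A),\alpha(B)]$; and the inequality $\alpha\rho(T) \subseteq T$ should be asserted only for $T$ containing some $\alpha(F)$ (otherwise the defining union is empty), which holds for every vertex of $K_{\alpha}$ --- the only case you use. Your reading of $\square^{r}$ as $\square^{m}$ is the intended one.
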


\begin{proof}
Suppose that $F$ is a vertex of $K_{\alpha}$. Then $F \subseteq
[\alpha(A),\alpha(B)]$ for some interval $[A,B]$ of $K$, so there is a
vertex $B$ of $K$ such that $F \subseteq \alpha(B)$. There is a minimal
such $B$, call it $B_{F}$, since $\alpha$ preserves meets.

If $F=\alpha(C)$ for some $C$, then $B_{F} = C$, since $\alpha$ is a
monomorphism. In effect, $\alpha(C) \leq \alpha(B_{F}) \leq
\alpha(C)$, so $B_{F} = C$ in this case.

Suppose that
\begin{equation*}
  \omega: \alpha(A) \to F_{1} \to \dots \to F_{k} \to \alpha(B)
  \end{equation*}
is a path in $K_{\alpha}$. Then each $F_{i} \to F_{i+1}$ is in an interval $[\alpha(C_{i}),\alpha(D_{i})]$, so that the diagram of inclusions
\begin{equation*}
  \xymatrix{
    F_{i} \ar[r] \ar[d] \ar[dr] & F_{i+1} \ar[d] \\
    \alpha(B_{F_{i}}) \ar[r] & \alpha(B_{F_{i+1}})
    }
\end{equation*}
is in that same interval. The inclusion $\alpha(B_{F_{i}}) \to
\alpha(B_{F_{i+1}})$ is the image of an inclusion $B_{F_{i}} \to
B_{F_{i+1}}$ by the minimality of $B_{F_{i}}$. It follows that the
diagram
\begin{equation}\label{eq A}
  \xymatrix{
    \alpha(A) \ar[r] \ar[dr] \ar[d]_{1} & F_{1} \ar[dr] \ar[r] \ar[d] & \dots \ar[r] \ar[dr] 
    & F_{k} \ar[r] \ar[d] \ar[dr] & \alpha(B) \ar[d]^{1} \\
    \alpha(A) \ar[r] & \alpha(B_{F_{1}}) \ar[r] & \dots \ar[r]
    & \alpha(B_{F_{k}}) \ar[r] & \alpha(B)
  }
\end{equation}
defines a homotopy in $\vert K_{\alpha} \vert$ from the path $\omega$
to the path along the bottom, which path is in the image of the
function $P(K)(A,B) \to P(K_{\alpha})(\alpha(A),\alpha(B))$, because
all displayed simplices are in $\vert K_{\alpha} \vert$.

Suppose given a commutative diagram
\begin{equation*}
  \xymatrix{
    \alpha(A) \ar[r] & \dots \ar[r] & F_{i} \ar[rr] \ar[dr]
    && F_{i+1} \ar[r] & \dots \ar[r] & \alpha(B)\\
    &&& F \ar[ur]
  }
\end{equation*}
where $\omega$ is the path along the top, and the displayed triangle
of inclusions defines a $2$-simplex $\sigma$ of $\vert K_{\alpha} \vert$. This $2$-simplex is in some interval $[\alpha(C),\alpha(D)]$, and the corresponding diagram
\begin{equation*}
  \xymatrix{
    \alpha(B_{F_{i}}) \ar[rr] \ar[dr] && \alpha(B_{F_{i+1}}) \\
    & \alpha(B_{F}) \ar[ur] 
  }
  \end{equation*}
is also in the interval $[\alpha(C),\alpha(D)]$, by minimality. This
simplex is the image of a $2$-simplex of $\vert K \vert$.

We have therefore defined a function
\begin{equation*}
  s: P(K_{\alpha}(\alpha(A),\alpha(B)) \to P(K)(A,B)
\end{equation*}
such that the composite $s \cdot \alpha_{\ast}$ is the identity on $P(K)(A,B)$.
The construction of the function $s$ and the existence of the homotopies (\ref{eq A}) together imply that the function
\begin{equation*}
  \alpha_{\ast}: P(K)(A,B) \to P(K_{\alpha})(\alpha(A),\alpha(B))
\end{equation*}
is surjective. It follows that the function $\alpha_{\ast}$ is a bijection, as required.
\end{proof}

\section{Frontier subcomplex}

Suppose that $i: K \subseteq \square^{n}$ is a finite cubical
complex. Recall from Section 3 that the assignment $F \mapsto \vert F
\vert =: t(F)$ that is defined by cardinality determines a poset morphism
$\mathcal{P}(\underline{n}) \to \mathbb{N}$, and hence a composite
functor
\begin{equation*}
  t: P(K) \xrightarrow{i_{\ast}} \mathcal{P}(\underline{n}) \xrightarrow{t} \mathbb{N}.
\end{equation*}

Observe that if $v \to w$ is a non-degenerate $1$-simplex of $\vert K
\vert$, then there is a strict containment relation $v \subset w$ as
subsets of $\underline{n}$, so that $t(v) < t(w)$. A more precise version
of this statement applies to all $1$-cells $v \to w$ of $K$: $t(w) =
t(v) +1$ for such a $1$-cell.

The functor $t$ defines full subcomplexes of the complex $K$ and its
triangulation $\vert K \vert$.  In particular, if $r < s$ in
$\mathbb{N}$, let $K(r,s)$ be the subcomplex of cells whose vertices
$F$ satisfy $r \leq t(F) \leq s$, and let $\vert K \vert(r,s)$ be the
subcomplex of $\vert K \vert$ whose simplices have vertices $F$ with
$t(f)$ in the same range.

Then we have the following:

\begin{lemma}\label{lem 10}
  \begin{itemize}
    \item[1)] $\vert K \vert(r,s)$ is a full
      subcomplex of $\vert K \vert$.
    \item[2)] The canonical map
      \begin{equation*}
        \vert K(r,s) \vert \to \vert K \vert(r,s)
      \end{equation*}
      is an isomorphism of simplicial complexes.
  \end{itemize}
\end{lemma}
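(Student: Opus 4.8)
The plan is to prove the two assertions in order, using the definition of a full subcomplex from Section 1 together with the key observation that $t$ is strictly monotone along non-degenerate $1$-simplices.

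\textbf{Part 1.} To show $\vert K \vert(r,s)$ is a full subcomplex of $\vert K \vert$, I must verify the two conditions in the definition of fullness. For the path-closure condition, suppose $v$ and $v'$ are vertices of $\vert K \vert(r,s)$, so $r \le t(v) \le s$ and $r \le t(v') \le s$, and suppose there is a path $v = v_0 \to v_1 \to \dots \to v_n = v'$ in $\vert K \vert$. The crucial point is that along any such path the function $t$ is monotone: each $1$-simplex $v_i \to v_{i+1}$ satisfies $v_i \subseteq v_{i+1}$ as subsets of $\underline{n}$, hence $t(v_i) \le t(v_{i+1})$. Therefore $t$ is non-decreasing along the path, which forces $r \le t(v) \le t(v_i) \le t(v') \le s$ for every intermediate vertex. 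Wait---this argument only bounds the $v_i$ from below by $t(v)$ and from above by $t(v')$ when $t(v) \le t(v')$; if instead $t(v') < t(v)$ then no such path can exist, since $t$ cannot decrease, so the condition is vacuous. In either case each intermediate $v_i$ lies in the range $[r,s]$, so all $v_i \in \vert K \vert(r,s)$. The second fullness condition is immediate from the definition of $\vert K \vert(r,s)$: a simplex of $\vert K \vert$ all of whose vertices have $t$-value in $[r,s]$ is by definition in $\vert K \vert(r,s)$.

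\textbf{Part 2.} For the isomorphism $\vert K(r,s) \vert \to \vert K \vert(r,s)$, the map is the one induced on triangulations by the inclusion $K(r,s) \subseteq K$. I would argue that it is a bijection on simplices in each dimension. A simplex of $\vert K \vert$ is a chain $F_0 \subseteq \dots \subseteq F_k$ lying in some cell $\vert [A,B] \vert$ of $K$; it lies in $\vert K \vert(r,s)$ precisely when all $t(F_j) \in [r,s]$. The task is to show that any such chain already lies in $\vert K(r,s) \vert$, i.e. in the triangulation of some cell $[A',B']$ of $K$ that itself survives the restriction to $[r,s]$. The natural candidate is the subinterval $[F_0, F_k] \subseteq [A,B]$: this is a cell of $K$ because $K$ is closed under subintervals, and every vertex $G$ of $[F_0,F_k]$ satisfies $F_0 \subseteq G \subseteq F_k$, hence $r \le t(F_0) \le t(G) \le t(F_k) \le s$, so $[F_0,F_k]$ is a cell of $K(r,s)$. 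The chain $F_0 \subseteq \dots \subseteq F_k$ is a simplex of $\vert [F_0,F_k] \vert$, hence of $\vert K(r,s) \vert$. Thus the map is surjective on simplices; injectivity is clear since it is a restriction of the monomorphism $\vert K(r,s) \vert \to \vert K \vert$.

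\textbf{Main obstacle.} The delicate point is Part 2: one must be careful that the cardinality condition is applied at the level of the generating cells of the cubical complex $K$, not merely vertex-wise on $\vert K \vert$. The definition of $K(r,s)$ selects those cells $[A,B]$ all of whose vertices have $t$-value in $[r,s]$, whereas $\vert K \vert(r,s)$ is defined by a vertex condition on simplices. The reconciliation uses that $K$ is subinterval-closed, so that the minimal interval $[F_0,F_k]$ spanned by any admissible chain is itself a cell of $K$, and that $t$ is monotone on each interval so that the cardinality bounds propagate from the chain's endpoints to the whole interval. Once this bridging observation is in place, both surjectivity of the simplex map and the computation in Part 1 follow routinely.
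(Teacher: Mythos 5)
Your proof is correct and follows essentially the same route as the paper: Part 1 uses monotonicity of $t$ along paths to establish path-closure (the paper phrases this with strict inequalities for non-degenerate paths, but the content is identical), and Part 2 identifies the subinterval $[F_0,F_k]$ spanned by a chain as a cell of $K(r,s)$ via subinterval-closure of $K$, exactly the paper's argument. Your ``main obstacle'' discussion merely makes explicit the bridging step that the paper leaves implicit in the phrase ``it follows that the interval $[F_{0},F_{p}]$ defines a cell of $K(r,s)$.''
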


\begin{proof}
  For statement 1), suppose that $v,w$ are vertices of $\vert K
  \vert(r,x)$ and that
  \begin{equation*}
    v=v_{0} \to v_{1} \to \dots \to v_{n}=w
  \end{equation*}
  is a non-degenerate path from $v$ to $w$ in $\vert K \vert$. Then
  \begin{equation*}
    r \leq t(v)=t(v_{0}) < t(v_{1}) < \dots < t(v_{n}) = t(w) \leq s,
  \end{equation*}
  so that all vertices $v_{i}$ are in $\vert K \vert(r,s)$. The higher
  simplex condition for fullness of $\vert K \vert(r,s)$ is automatic
  from the definition.

The canonical inclusion of statement 2) arises from the observation
that $K(r,s)$ is a union of cells of $K$, and the induced inclusion
$\vert K(r,s) \vert \subseteq \vert K \vert$ factors through $\vert K
\vert(r,s)$.
  
  To prove statement 2), it is enough to show that the inclusion
  \begin{equation*}
        \vert K(r,s) \vert \to \vert K \vert(r,s)
      \end{equation*}
      is surjective on non-degenerate simplices. If $\sigma$ is a
      simplex of $\vert K \vert(r,s)$, it is in
      the image of the map $\vert \square^{k} \vert \to \vert K \vert$
      which is induced by a non-degenerate cell of $K$.
      The simplex $\sigma$ has the form
      \begin{equation*}
         F_{0} \leq F_{1} \leq \dots \leq F_{p}
      \end{equation*}
      with
      \begin{equation*}
      r \leq  \vert F_{0} \vert \leq \vert F_{1} \vert \leq \dots \leq \vert F_{p} \vert \leq s,
      \end{equation*}
      and it follows that the interval $[F_{0},F_{p}]$ defines a cell
      of $K(r,s)$. The simplex $\sigma$ is therefore in $\vert K(r,s)
      \vert$.
 \end{proof}

Suppose that $K \subseteq \square^{n}$ is a finite cubical complex
such that $\sk_{2}(K) = K$, and pick $M$ such that $0 < M < n$. Let $A
= K(0,M)$ and $B = K(M+1,n)$. Then $\vert A \vert$ and $\vert B \vert$
are full subcomplexes of $\vert K \vert$ by Lemma \ref{lem 10}.

Every path
\begin{equation*}
  v_{0} \to v_{1} \to \dots \to v_{k}
\end{equation*}
in $K$ has a number $r$ (which could be $-1$ or $k$) such that $v_{i} \in A$
for $i \leq r$ and $v_{i} \in B$ for $i \geq r+1$. The {\it frontier
subcomplex} $L$ is generated by $1$-cells and $2$-cells which have
vertices in $A$ and $B$.

Suppose that $u \in A$ and $v \in B$. Suppose that the $1$-cell
$\sigma: x \to y$ has $x \in A$ and $y \in B$. Then composition with
$\sigma$ defines a map
\begin{equation*}
  \sigma_{\ast}: P(A)(u,x) \times P(B)(y,v) \to P(K)(u,v).
\end{equation*}
Suppose that $\omega: \Delta^{1} \times \Delta^{1} \to K$ is defined
by $2$-simplices $\omega_{0}$ and $\omega_{1}$ such that
$d_{1}\omega_{0} = d_{1}\omega_{1}$ and $d_{2}(\omega_{0}) \in A$ and
$d_{0}(\omega_{1}) \in B$. One of the $2$-simplices $\omega_{0}$ or
$\omega_{1}$ could be degenerate.

Consider the picture:
\begin{equation*}
  \xymatrix{
    & \sigma(0,0) \ar[r]^{\alpha_{0}} \ar[dd]_{A} \ar[ddr]
    & \sigma(1,0) \ar[dd]^{B} \ar@{.>}[dr]^{q_{0}} \\
    u \ar[ur]^{p_{0}} \ar@{.>}[dr]_{p_{1}} &&& v \\
    & \sigma(0,1) \ar[r]_{\alpha_{1}} & \sigma(1,1) \ar[ur]_{q_{1}}
  }
\end{equation*}
There are induced maps
\begin{equation*}
  \omega_{0}: P(A)(u,\sigma(0,0)) \times P(B)(\sigma(1,1),v) \to P(A)(u,\sigma(0,0)) \times P(B)(\sigma(1,0),v)
\end{equation*}
and
\begin{equation*}
  \omega_{0}: P(A)(u,\sigma(0,0)) \times P(B)(\sigma(1,1),v) \to P(A)(u,\sigma(0,1)) \times P(B)(\sigma(1,1),v)
  \end{equation*}

These maps define the displayed parallel pair of arrows in the
 diagram
 \begin{equation}\label{eq 4}
   \begin{aligned}
     \bigsqcup_{\omega\ \text{as above}} P(A)(u,\sigma(0,0)) \times P(B)(\sigma(1,1),v) \rightrightarrows \bigsqcup_{x \xrightarrow{\sigma} y\ \in\ L} &P(A)(u,x) \times P(B)(y,v) \\
     &\to P(K)(u,v).
   \end{aligned}
 \end{equation}

\begin{lemma}\label{lem 11}
The diagram (\ref{eq 4}) is a coequalizer.
  \end{lemma}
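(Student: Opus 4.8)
The plan is to verify the universal property of the coequalizer directly: I would show that the canonical map $\pi$ to $P(K)(u,v)$ on the right of (\ref{eq 4}) is surjective, that it coequalizes the displayed parallel pair, and that its fibres are exactly the equivalence classes generated by that pair. Throughout I use the presentation of $P(K) = P(\sk_2 K)$ in which the objects are the vertices, the generators are the nondegenerate $1$-cells, and the relations are the commuting squares coming from the $2$-cells of $K$. The essential geometric input is the time functor $t$: along any nondegenerate $1$-cell $v\to w$ one has $t(w)=t(v)+1$, so $t$ increases by exactly one at each step of a path.

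First I would establish surjectivity together with a \emph{crossing decomposition}. Given $f\in P(K)(u,v)$ with $u\in A$, $v\in B$, choose a representing path of $1$-cells $u=v_0\to\dots\to v_k=v$. Since $t(u)\le M<M+1\le t(v)$ and $t$ rises by one at each step, there is a unique index $r$ with $t(v_r)=M$ and $t(v_{r+1})=M+1$; then $\sigma\colon v_r\to v_{r+1}$ is a frontier $1$-cell of $L$, the segment $v_0\to\dots\to v_r$ lies in $A$, and $v_{r+1}\to\dots\to v_k$ lies in $B$. Thus $f=\sigma_{\ast}(\alpha,\beta)$ with $\alpha\in P(A)(u,v_r)$ and $\beta\in P(B)(v_{r+1},v)$, which shows $\pi$ is onto and furnishes a candidate inverse sending $f$ to the class of $(\sigma,\alpha,\beta)$. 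That the two composites into $P(K)(u,v)$ agree is immediate from the squares $\omega$: the triangles $\omega_0,\omega_1$ share the diagonal $d_1\omega_0=d_1\omega_1$, so both boundary composites equal that diagonal, i.e. $B\circ\alpha_0=\alpha_1\circ A$ in $P(K)$; pre- and post-composing with arbitrary $p_0\in P(A)(u,\sigma(0,0))$ and $q_1\in P(B)(\sigma(1,1),v)$ shows the two induced triples have the same image under $\pi$.

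The substance of the argument, and the step I expect to be the main obstacle, is the converse: if two triples have equal image in $P(K)(u,v)$, they are linked by a finite zig-zag of the parallel-pair relations. Representing each triple by a path, equality in $P(K)$ means the two paths are connected by a finite sequence of elementary square moves, each swapping the two length-two sides of one $2$-cell. I would classify each move by the position of its square relative to the frontier using $t$. If every vertex of the square has size $\le M$ the move is internal to $A$; by the level count its swapped segment lies strictly before the crossing and retains the endpoint $v_r$, so it replaces the $A$-path by a path in the same class of $P(A)(u,v_r)$ and leaves $\sigma$ and the $B$-part fixed—hence fixes the element of the middle coproduct (where the $A$-factor is a \emph{morphism} of $P(A)$, not a path). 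Symmetrically, squares with all vertices of size $\ge M+1$ give moves internal to $B$. The remaining squares straddle the frontier, and here the level count forces a $1$–$3$ or $3$–$1$ split: the square meets $A$ in a single vertex (its minimum $F$, with $|F|=M$) and $B$ in three, or meets $B$ in a single vertex (its maximum, with $|F|=M-1$) and $A$ in three. In the first case the move exchanges the two frontier edges $F\to F\cup\{a\}$ and $F\to F\cup\{b\}$ sharing the source $F$, absorbing the complementary edge into the $P(B)$-factor; in the second it exchanges the two frontier edges sharing the common target, absorbing the complementary edge into the $P(A)$-factor. Each such move is exactly the relation carried by that straddling square, which is the generator recorded by the parallel pair, with one of the triangles $\omega_0,\omega_1$ degenerate—the case flagged just after the statement.

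The delicate points, which I would treat carefully, are precisely this last matching: confirming that \emph{every} straddling move is realized by the parallel pair by tracking which edge of the square becomes the new crossing $\sigma$ and which is absorbed into $P(A)$ or $P(B)$, and checking that the degenerate-triangle readings of the $1$–$3$ and $3$–$1$ squares agree with the two maps $\omega_0,\omega_1$ after unwinding the definitions. Granting this classification, moves internal to $A$ or $B$ leave the class in the middle coproduct unchanged while straddling moves apply parallel-pair generators, so the crossing decomposition and $\pi$ descend to mutually inverse maps on the coequalizer. This identifies $P(K)(u,v)$ with the coequalizer of (\ref{eq 4}).
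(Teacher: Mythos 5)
The paper contains no proof of Lemma \ref{lem 11} at all --- it says only that the proof is ``essentially by inspection'' --- so there is nothing to compare your argument against line by line; what you have written is that inspection actually carried out, and its architecture is correct. Your three ingredients are the right ones: the presentation of $P(K)$ with the cubical $1$-cells as generators and the squares of $2$-cells as relations; the crossing decomposition forced by the time functor $t$ (each $1$-cell raises $t$ by exactly one, so a path from $u \in A$ to $v \in B$ crosses level $M$ at a unique edge, giving surjectivity of $\pi$ and a candidate inverse); and the classification of elementary square moves by the level $\ell$ of the square's minimal vertex ($\ell \leq M-2$ internal to $A$, $\ell \geq M+1$ internal to $B$, $\ell \in \{M-1,M\}$ straddling). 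Your check that $\pi$ coequalizes the parallel pair (both composites equal the shared diagonal, so $B \circ \alpha_{0} = \alpha_{1} \circ A$) is complete as written.

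The delicate point you flag is indeed the crux, and your description of it needs one concrete correction, which your zig-zag framework fortunately absorbs. A straddling move is \emph{not} realized by a single generator of the parallel pair: in the $1$--$3$ case (minimum $F$, $\vert F \vert = M$), a square $\omega$ with top edge $F \to F \cup \{a\}$ and bottom edge $F \to F \cup \{b\}$ would need right edge $F \cup \{a\} \to F \cup \{b\}$, and these sets are incomparable. Instead the move factors as a zig-zag of \emph{two} generators, each coming from a square whose left edge is degenerate at $F$ and whose bottom edge is the \emph{diagonal} $F \to F \cup \{a,b\}$; the intermediate element of the middle coproduct sits over that diagonal crossing. (The $3$--$1$ case is mirror-symmetric, with the right edge degenerate.) Consequently, the index set of the middle coproduct in (\ref{eq 4}) must be read as the $1$-simplices of $\vert L \vert$ running from $A$ to $B$ --- diagonals of straddling $2$-cells included --- and not merely the cubical $1$-cells: under the cubical-only reading the level count forces every admissible $\omega$ to be wholly degenerate, the parallel pair collapses to two identity maps, and the lemma is false (already for a single $2$-cell $[\emptyset,\{1,2\}]$ with $M=0$, where the coequalizer would have two elements but $P(K)(\emptyset,\{1,2\})$ has one). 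This reading is also what the paper's remark that one of $\omega_{0},\omega_{1}$ may be degenerate is pointing at. With that adjustment your classification closes, and the two maps you construct are mutually inverse as claimed.
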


The proof of Lemma \ref{lem 11} is essentially by inspection.

In practical terms, Lemma \ref{lem 11} says that one can compute
$P(K)$ by first computing $P(A)$ and $P(B)$ (in parallel), and then by
stitching these calculations together with the coequalizer (\ref{eq 4}). This coequalizer defines $P(K)(u,v)$ as a set of equivalence classes on a set that we've computed, namely
\begin{equation*}
  \bigsqcup_{x \xrightarrow{\sigma} y\ \in\ L} P(A)(u,x) \times
  P(B)(y,v),
\end{equation*}
for an equivalence relation that is defined by
the parallel pair of functions in the coequalizer picture (\ref{eq 4}).
 
\nocite{pathcat}
\nocite{Joyal-quasi-cat}
\nocite{Mis-path}

\bibliographystyle{plain}
\bibliography{spt}

\begin{thebibliography}{1}

\bibitem{GJ}
P.~G. Goerss and J.~F. Jardine.
\newblock {\em Simplicial {H}omotopy {T}heory}, volume 174 of {\em Progress in
  Mathematics}.
\newblock Birkh\"auser Verlag, Basel, 1999.

\bibitem{J40}
J.~F. Jardine.
\newblock Categorical homotopy theory.
\newblock {\em Homology, Homotopy Appl.}, 8(1):71--144 (electronic), 2006.

\bibitem{pathcat}
J.~F. Jardine.
\newblock Path categories and resolutions.
\newblock {\em Homology Homotopy Appl.}, 12(2):231--244, 2010.

\bibitem{dyn}
J.~F. Jardine.
\newblock Homotopy theories of diagrams.
\newblock {\em Theory Appl. Categ.}, 28:No. 11, 269--303, 2013.

\bibitem{Joyal-quasi-cat}
A.~Joyal.
\newblock Notes on quasi-categories.
\newblock Preprint, \url{http://ncatlab.org/nlab/show/Andre+Joyal}, 2008.

\bibitem{Mis-path}
Michael~D. Misamore.
\newblock Computing path categories of finite directed cubical complexes.
\newblock {\em Applicable Algebra in Engineering, Communication and Computing},
  pages 1--14, 2014.

\end{thebibliography}

\end{document}